\numberwithin{equation}{section}
\def\ep{\mathbb{G}}
\begin{document}

\begin{frontmatter}
\title{A Local Maximal Inequality under Uniform Entropy}
\runtitle{Maximal Inequality}

\begin{aug}
\author{Aad van der Vaart\ead[label=e1]{aad@cs.vu.nl}}
\address{Department of Mathematics, Faculty of Sciences, Vrije Universiteit
                   De Boelelaan 1081a, 1081 HV Amsterdam,\\ \printead{e1}}

\author{Jon A. Wellner\thanksref{t2}\ead[label=e2]{jaw@stat.washington.edu}}
\thankstext{t2}{Supported in part by NSF Grant DMS-0804587, and by  NI-AID grant 2R01 AI291968-04}
\address{Department of Statistics, University of Washington, Seattle, WA  98195-4322,\\ \printead{e2}}

\runauthor{van der Vaart and Wellner}
\end{aug}

\begin{abstract}
We derive an upper bound for the mean of the supremum
of the empirical process indexed by a class of functions
that are known to have variance bounded by a small constant $\d$.
The bound is expressed in the uniform entropy integral of
the class at $\d$. The bound yields a
rate of convergence of minimum contrast estimators when
applied to the modulus of continuity of the contrast functions.
\end{abstract}

\begin{keyword}[class=AMS]
\kwd[Primary ]{60K35}
\kwd{60K35}
\kwd[; secondary ]{60K35}
\end{keyword}

\begin{keyword}
\kwd{Empirical process, modulus of continuity, minimum contrast estimator,
rate of convergence}
\end{keyword}

\end{frontmatter}

\section{Introduction}
The \emph{empirical measure} $\PP_n$ and \emph{empirical process} $\ep_n$ 
of a sample of observations $X_1,\ldots, X_n$ from a probability
measure $P$ on a measurable space $(\X,\A)$ attach to a 
given measurable function $f: \X\to\RR$ the numbers
$$\PP_nf=\frac 1n\sumin f(X_i),
\qquad \ep_nf=\frac1{\sqrt n}\sumin \bigl(f(X_i)-Pf\bigr).$$
It is often useful to study the suprema of these stochastic processes 
over a given class $\F$ of measurable functions. The distribution
of the supremum 
$$\|\ep_n\|_\F:=\sup_{f\in\F}|\ep_nf|$$
is known to concentrate near its mean value, at a rate depending on the
size of the envelope function of the class $\F$, but irrespective of its
complexity. On the other hand, the mean value 
of $\|\ep_n\|_\F$ depends on the size of the class $\F$.
Entropy integrals, of which there are two basic versions,
are useful tools to bound this mean value.

The \emph{uniform entropy integral} was
introduced in \cite{Pollard} and \cite{Kolchinskii}, following \cite{Dudley},
in their study of the abstract version of Donsker's theorem. 
We  define an $L_r$-version of it as 
$$J(\d,\F,L_r)
=\sup_Q \int_0^\d\sqrt{1+\log N\bigl(\e\|F\|_{Q,r},\F,L_r(Q)\bigr)}\,d\e.$$
Here  the supremum is taken over all finitely discrete probability
distributions $Q$ on $(\X,\A)$, the \emph{covering number}
$N\bigl(\e,\F,L_r(Q)\bigr)$ is the minimal
number of balls of radius $\e$ in $L_r(Q)$ needed to cover $\F$,
$F$ is an envelope function of $\F$,
and $\|f\|_{Q,r}$ denotes the norm of a function $f$ in $L_r(Q)$.
The integral is defined relative to an \emph{envelope function}, which need
not be the minimal one, but can be any  
measurable function $F: \X\to\RR$ such that $|f|\le F$ for every $f\in\F$.
If multiple envelope functions are under consideration, then we write
$J(\d,\F\given F,L_r)$ to stress this dependence.
An inequality, due to Pollard (also see \cite{vdVW}, 2.14.1), says,
under some measurability assumptions, that
\begin{equation}
\label{vdVW2.14.1}
\E_P^* \|\ep_n\|_\F \lesssim J(1,\F,L_2)\, \|F\|_{P,2}.
\end{equation}
Here $\lesssim$ means smaller than up to a universal constant.
This shows that for a class $\F$ with finite
uniform entropy integral, the supremum $\|\ep_n\|_\F$ is not essentially
bigger than a multiple of 
the empirical process $\ep_nF$ at the envelope function $F$.
The inequality is particularly useful if this envelope function is
small.

The \emph{bracketing entropy integral} has its roots in the Donsker
theorem of \cite{Ossiander}, again following initial work by Dudley.
For a given norm it can be defined as
$$J_{[\,]}\bigl(\d,\F,\|\cdot\|\bigr)
=\int_0^\d\sqrt{1+\log N_{[\,]}\bigl(\e\|F\|,\F,\|\cdot\|\bigr)}\,d\e.$$
Here the \emph{bracketing number} $N_{[\,]}\bigl(\e,\F,\|\cdot\|\bigr)$
is the minimal number of brackets $[l,u]=\{f:\X\to \RR: l\le f\le u\}$
of size $\|u-l\|$ smaller than $\e$ needed to cover $\F$. A
useful inequality, due to Pollard (also see \cite{vdVW}, 2.14.2), is 
\begin{equation}
\label{vdVW2.14.2}
\E_P^*\|\ep_n\|_\F
\lesssim J_{[\,]}\bigl(1,\F,L_2(P)\bigr)\,\|F\|_{P,2}.
\end{equation}
Bracketing numbers are bigger than covering numbers (at twice the size),
and hence the bracketing integral is bigger than a multiple of the corresponding
entropy integral. However, the bracketing integral involves
only the single distribution $P$, whereas the uniform entropy integral
takes a supremum over all (discrete) distributions, making the
two integrals incomparable in general. Apart from
this difference the two maximal inequalities have the same message.

The two inequalities (\ref{vdVW2.14.1}) and (\ref{vdVW2.14.2})
involve the size of the envelope function,
but not the sizes of the individual functions in the class $\F$. 
They also exploit finiteness of the entropy
integrals only, roughly requiring that the entropy grows at smaller
order than $\e^{-2}$ as $\e\da0$, and not the precise size
of the entropy. In the case of the bracketing integral
this is remedied in the equality
(see \cite{vdVW}, 3.4.2), valid for any class
of functions $f: \X\to [-1,1]$ with $Pf^2\le \d^2PF^2$ and any $\d\in(0,1)$,
\begin{equation}
\label{vdVW3.4.2}
\E_P^*\|\ep_n\|_\F
\lesssim J_{[\,]}\bigl(\d,\F,L_2(P)\bigr)\,\|F\|_{P,2}
\ \biggl(1+\frac{J_{[\,]}\bigl(\d,\F,L_2(P)\bigr)}{\d^2\sqrt n\|F\|_{P,2}} 
\biggr).
\end{equation}
Here the assumption that the class of functions
is uniformly bounded is too restrictive for some applications,
but can be removed if the entropy integral is computed relative
to the stronger ``norm'' 
$$\|f\|_{P,B}=\Bigl(2P\bigl(e^{|f|}-1-|f|\bigr)\Bigr)^{1/2}.$$
%
%If $\|f\|_{P,B}\le \d\|F\|_{P,B}$ for  every $f\in\F$, then 
%$$\E_P^*\|\ep_n\|_\F
%\lesssim J_{[\,]}\bigl(\d,\F,\|\cdot\|_{P,B}\bigr)\|F\|_{P,B}
%\ \biggl(1+{J_{[\,]}\bigl(\d,\F,\|\cdot\|_{P,B}\bigr)
%\over \d^2\sqrt n\|F\|_{P,B}}\biggr).$$
Although it is not a norm, this quantity can be used to
define the size of brackets and hence bracketing numbers.
Inequality (\ref{vdVW3.4.2}) is valid for an arbitrary
class of functions with $\|f\|_{P,B}\le \d\|F\|_{P,B}$ 
if the $L_2(P)$-norm is replaced by $\|\cdot\|_{P,B}$ in its right
side (at four appearances) (see Theorem~3.4.3 of \cite{vdVW}).
The ``norm'' $\|\cdot\|_{P,B}$ derives from the refined version of Bernstein's
inequality, which was first used in the literature on rates of convergence of 
minimum contrast estimators in \cite{BirgeMassart} (also see~\cite{vdGeer}).

Maximal inequalities of type (\ref{vdVW3.4.2}) using \emph{uniform entropy} 
are thus far unavailable.  In
this note we derive an exact parallel of (\ref{vdVW3.4.2}) for
uniformly bounded functions, and investigate similar inequalities for
unbounded functions. The validity of these results seems
unexpected, as the stronger control given by bracketing 
has often been thought necessary for estimates of moduli
of continuity. It was suggested to us by Theorem~3.1 and its proof
in \cite{Gine}.

\subsection{Application to minimum contrast estimators}
Inequalities involving the sizes of the functions $f$ 
are of particular interest in the investigation
of empirical minimum contrast estimators. Suppose that $\hat\q_n$ mimimizes
a criterion of the type
$$\q\mapsto \PP_nm_\q,$$
for given measurable functions $m_\q: \X\to\RR$ indexed
by a parameter $\q$, and that the
population contrast satisfies,
for a ``true'' parameter $\q_0$ and some metric $d$ on the parameter set,
$$Pm_\q-P m_{\q_0}\gtrsim d^2(\q,\q_0).$$
A bound on the rate of convergence of $\hat\q_n$ to $\q_0$ can then be
derived from the modulus of continuity of
the empirical process $\ep_nm_\q$ indexed by the functions $m_\q$.
Specifically (see e.g.\ \cite{vdVW}, 3.2.5) if $\phi_n$ 
is a function such that $\d\mapsto\phi_n(\d)/\d^\a$ is decreasing for
some $\a<2$ and
\begin{equation}
\label{EqModulusIN}
\E \sup_{\q: d(\q,\q_0)<\d} \bigl|\ep_n(m_\q-m_{\q_0})\bigr|
\lesssim \phi_n(\d),
\end{equation}
then $d(\hat\q_n,\q_0)=O_P(\d_n)$, for $\d_n$ any solution to
\begin{equation}
\label{EqROC}
\phi_n(\d_n)\le \sqrt n\d_n^2.
\end{equation}
Inequality (\ref{EqModulusIN}) involves the empirical process
indexed by the class of functions $\M_\d=\{m_\q-m_{\q_0}: d(\q,\q_0)<\d\}$.
If $d$ dominates the $L_2(P)$-norm, or another norm $\|\cdot\|$
that can be used in an equality of the
type (\ref{vdVW3.4.2}),  such as
the Bernstein norm, and the norms of the envelopes of the 
classes $M_\d$ are bounded in $\d$, then we can choose 
$$\phi_n(\d)=J\bigl(\d,\M_\d,\|\cdot\|\bigr)
\ \biggl(1+\frac{J\bigl(\d,\M_\d,\|\cdot\|\bigr)}{\d^2\sqrt n}\biggr),
$$
where $J$ is an appropriate entropy integral.
For this choice the inequality (\ref{EqROC}) is equivalent to 
\begin{equation}
\label{EqRateEI}
J\bigl(\d_n,\M_{\d_n},\|\cdot\|\bigr)\le \sqrt n\d_n^2.
\end{equation}
Thus a rate of convergence can be read off directly from the
entropy integral. 

We note that an inequality of type (\ref{vdVW3.4.2})
is unattractive for very small $\d$, as the bound may even increase
to infinity as $\d\da0$. However, it is
accurate for the range of $\d$ that
are important in the application to moduli of continuity.

Moduli of continuity also play an important role in model
selection theorems. See for instance \cite{Massart}.

Inequalities involving uniform entropy permit for instance the immediate
derivation of rates of convergence for minimum contrast functions
that form VC-classes. Furthermore, uniform entropy is preserved
under various (combinatorial) operations to make new classes of functions.
This makes uniform entropy integrals a useful tool in situations
where  bracketing numbers may be difficult to handle. Equation
(\ref{EqRateEI}) gives an elegant characterization of rates
of convergence in these situations, where thus far ad-hoc arguments 
were necessary.

\section{Uniformly Bounded Classes}
Call the class $\F$ of functions  $P$-\emph{measurable} if the map
$$(X_1,\dots, X_n)\mapsto \sup_{f\in\F}\Bigl|\sumin e_if(X_i)\Bigr|$$
on the completion of the probability space $(\X^n,\A^n,P^n)$ is measurable,
for every sequence $e_1,e_2,\ldots, e_n\in \{-1,1\}$.

\begin{theorem}
\label{TheoremGineKoltchinskiiAbstract}
Let $\F$ be a $P$-measurable class of measurable functions 
with envelope function $F\le1$ and such 
that $\F^2$ is $P$-measurable. 
If $Pf^2< \d^2PF^2$, for every $f$ and some $\d\in(0,1)$, then
$$\E_P^*\|\ep_n\|_\F
\lesssim J\bigl(\d,\F,L_2\bigr)
\Bigl(1+\frac{J(\d,\F,L_2)}{\d^2\sqrt n\|F\|_{P,2}}\Bigr)\|F\|_{P,2}.$$
\end{theorem}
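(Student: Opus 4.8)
The plan is to run a chaining argument conditionally on the data with respect to the \emph{empirical} $L_2(\PP_n)$-seminorm, and then to convert the resulting random entropy integral into the deterministic integral $J(\d,\F,L_2)$ by a self-bounding argument. Write $a=\|F\|_{P,2}$ and let $T=\E_P^*\|\ep_n\|_\F$ be the quantity to be bounded. First I would symmetrize: by the $P$-measurability of $\F$, $T\lesssim\E\|\ep_n^\circ\|_\F=:T_0$, where $\ep_n^\circ f=n^{-1/2}\sum_i e_if(X_i)$ and $e_1,\dots,e_n$ are independent Rademacher signs. Conditioning on $X_1,\dots,X_n$, Hoeffding's inequality makes $f\mapsto\ep_n^\circ f$ sub-Gaussian for the seminorm $\|f-g\|_{\PP_n,2}$, so a chaining bound gives
$$\E_e\|\ep_n^\circ\|_\F\lesssim\int_0^{S}\sqrt{1+\log N\bigl(\e,\F,L_2(\PP_n)\bigr)}\,d\e,\qquad S:=\sup_{f\in\F}\|f\|_{\PP_n,2}.$$
Substituting $\e=u\|F\|_{\PP_n,2}$ turns the right side into $\|F\|_{\PP_n,2}\,\hat J(\hat\d)$, where $\hat\d:=S/\|F\|_{\PP_n,2}\in[0,1]$ and $\hat J(t):=\int_0^t\sqrt{1+\log N(u\|F\|_{\PP_n,2},\F,L_2(\PP_n))}\,du$ is the empirical entropy integral.

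The function $\hat J$ is concave (its integrand is nonincreasing in $u$) and satisfies $\hat J(t)\le J(t,\F,L_2)$, because $\PP_n$ is one of the finitely discrete measures in the supremum defining $J$. Concavity through the origin yields the pointwise linearization $\hat J(\hat\d)\le\hat J(\d)(1+\hat\d/\d)\le J(\d)(1+\hat\d/\d)$, where I abbreviate $J(\d)=J(\d,\F,L_2)$. Multiplying by $\|F\|_{\PP_n,2}$, taking expectations over $X$, and using $\|F\|_{\PP_n,2}\hat\d=S$ together with Jensen's inequality for the concave square root gives
$$T_0\lesssim J(\d)\Bigl(\E\|F\|_{\PP_n,2}+\d^{-1}\E S\Bigr)\le J(\d)\Bigl(a+\d^{-1}\bigl(\E\sup_{f\in\F}\PP_nf^2\bigr)^{1/2}\Bigr).$$

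The crux is the random empirical radius $\E\sup_{f}\PP_nf^2$. I would split $\sup_f\PP_nf^2\le\sup_fPf^2+\|\PP_n-P\|_{\F^2}\le\d^2a^2+n^{-1/2}\|\ep_n\|_{\F^2}$, using the hypothesis $Pf^2<\d^2PF^2$. The fluctuation term is an empirical process over $\F^2$; by the $P$-measurability of $\F^2$, symmetrization, and the contraction principle (the map $t\mapsto t^2$ is Lipschitz on $[-1,1]$ and vanishes at $0$), one gets $\E\|\ep_n\|_{\F^2}\lesssim T_0$. Substituting this back and writing $x=T_0^{1/2}$ produces a quadratic inequality of the form $x^2\lesssim J(\d)\,a+J(\d)\,\d^{-1}n^{-1/4}x$; solving it gives $T_0\lesssim J(\d)\,a+J(\d)^2/(\d^2\sqrt n)$, which is the asserted bound after factoring out $J(\d)\,a$.

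The main obstacle is precisely this last step: because the chaining is carried out in the empirical metric, the natural radius $\hat\d$ is random and a priori uncontrolled, and bounding its mean forces one to estimate the empirical process indexed by $\F^2$, which loops back to the quantity $T_0$ being bounded. Closing this loop via the contraction principle and solving the resulting quadratic inequality is the device that replaces the bracketing argument, where brackets control the true $L_2(P)$-size directly and no such self-reference arises.
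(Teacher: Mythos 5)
Your proof is correct and achieves exactly the stated bound, but it closes the central self-referential loop with devices genuinely different from the paper's. The skeleton is shared: the paper's starting point \eqref{EqRef2.14.1} (quoted from \cite{vdVW}, 2.14.1) is precisely what your symmetrization-plus-chaining derivation produces --- the empirical entropy integral at the random radius $\hat\d$, multiplied by $(\PP_nF^2)^{1/2}$, then dominated by the uniform integral --- and the paper controls that radius exactly as you do, writing $\sup_f\PP_nf^2\le\d^2PF^2+n^{-1/2}\|\ep_n\|_{\F^2}$ and using symmetrization and the Rademacher contraction principle (the one place $F\le1$ is needed) to bound $\E_P^*\|\ep_n\|_{\F^2}$ by a multiple of the quantity being estimated. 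The divergence is in the analysis that follows. The paper keeps $J$ evaluated at the random radius: it proves concavity of the perspective $(x,y)\mapsto J(\sqrt{x/y},\F,L_2)\sqrt y$, applies Jensen's inequality (with care about outer expectations) to obtain \eqref{EqHulp}, and then invokes Lemma~\ref{LemmaRecursion} with $r=1$ to solve the resulting inequality $z^2\lesssim\d^2+J(z,\F,L_2)/(\sqrt n\|F\|_{P,2})$ for $J(z,\F,L_2)$. You instead linearize at once, $\hat J(\hat\d)\le J(\d)(1+\hat\d/\d)$ --- legitimate since a nonnegative concave function vanishing at $0$ has $t\mapsto J(t)/t$ nonincreasing --- after which you need only the scalar Jensen inequality for the square root and an elementary quadratic inequality in $\sqrt{T_0}$; solving it gives $T_0\lesssim J(\d)\|F\|_{P,2}+J(\d)^2/(\d^2\sqrt n)$, which is the theorem. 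Your route is more elementary and self-contained: it bypasses the convex-analysis step (perspectives and vector composition rules), reduces the outer-expectation technicalities that a bivariate Jensen step forces the paper to address explicitly, and makes the recursion lemma unnecessary. What the paper's heavier machinery buys is reuse: the intermediate bound \eqref{EqHulp}, with $J$ at the averaged random radius, and Lemma~\ref{LemmaRecursion} with general exponent $r\in(0,2)$ serve as the common backbone for Theorems~\ref{TheoremGineKoltchinskiiLp} and~\ref{TheoremGineKoltchinskiiExp} on unbounded envelopes, where the radius enters as $J(z^{1/p},\F,L_2)$ or $J\bigl(k(z),\F,L_2\bigr)$ and your linearization-plus-quadratic device would have to be reworked case by case.
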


\begin{proof}
We use the following refinement of (\ref{vdVW2.14.1})
(see e.g.\ \cite{vdVW}, 2.14.1): for
any $P$-measurable class $\F$, 
\begin{equation}
\label{EqRef2.14.1}
\E^*_P\|\ep_n\|_\F\lesssim \E_P^* J\Bigl(\frac{\sup_f (\PP_nf^2)^{1/2}}
{(\PP_nF^2)^{1/2}},\F,L_2\Bigr)\,(\PP_nF^2)^{1/2}.
\end{equation}
Because $\d\mapsto J(\d,\F,L_2)$ is the integral of 
a nonincreasing nonnegative function, it is a concave function
such that the map $t\mapsto J(t)/t$, which is the average
of its derivative over $[0,t]$, is nonincreasing. The concavity
shows that its \emph{perspective} $(x,t)\mapsto t J(x/t,\F,L_2)$ is
a concave function of its two arguments (cf. \cite{Boyd}, page 89).
Furthermore, the ``extended-value extension'' of this function 
(which by definition is $-\infty$ if $x\le0$ or $t\le 0$)
is obviously nondecreasing in its
first argument and was noted to be nondecreasing in its second argument.
Therefore, by the vector composition rules for concave functions 
(\cite{Boyd}, pages 83--87, especially lines -2 and -1 of page 86),
the function $(x,y)\mapsto H(x,y):=J\bigl(\sqrt{x/y},\F,L_2\bigr) \sqrt y$
is concave. We have that $\E_P^* \PP_nF^2=\|F\|_{P,2}^2$.
Therefore, by an application of Jensen's inequality
to the right side of the preceding display we obtain,
for $\s_n^2=\sup_f \PP_nf^2$,
\begin{equation}
\label{EqHulp}
\E_P^*\|\ep_n\|_\F
\lesssim J\Bigl(\frac{\sqrt{\E_P^* \s_n^2}}{\|F\|_{P,2}},\F,L_2\Bigr)\|F\|_{P,2}.
\end{equation}
The application of Jensen's inequality with outer expectations
can be justified here by the monotonicity of 
the function $H$, which shows that the measurable majorant of 
a variable $H(U,V)$ is bounded above by $H(U^*,V^*)$,  for
$U*$ and $V^*$ measurable majorants of $U$ and $V$.
Thus $\E^*H(U,V)\le \E H(U^*,V^*)$, after which Jensen's
inequality can be applied in its usual (measurable) form.

The second step of the proof is to bound $\E_P^*\s_n^2$.
Because $\PP_nf^2=Pf^2+n^{-1/2}\ep_nf^2$ and $Pf^2\le \d^2PF^2$
for every $f$, we have 
\begin{equation}
\label{EqForTemporaryUse}
\E_P^*\s_n^2\le \d^2\|F\|_{P,2}^2+\frac1{\sqrt n}\E_P^*\|\ep_n\|_{\F^2}.
\end{equation}
Here the empirical process in the second term can be replaced
by the symmetrized empirical process $\ep_n^o$ (defined
as $\ep_n^of=n^{-1/2}\sumin \e_if(X_i)$ for independent Rademacher
variables $\e_1,\e_2,\ldots,\e_n$) at the cost
of adding a multiplicative factor 2 (e.g.\ \cite{vdVW},
 2.3.1). The expectation can be factorized
as the expectation on the Rademacher variables $\e$ followed
by the expectation on $X_1,\ldots,X_n$, and $\E_\e\|\ep_n^o\|_{\F^2}
\le 2 \E_\e\|\ep_n^o\|_{\F}$ by the contraction principle for Rademacher
variables (\cite{Ledoux}, Theorem~4.12), 
and the fact that $F\le 1$ by assumption. Taking the expectation
on $X_1,\ldots,X_n$, we obtain that 
$\E_P^*\|\ep_n\|_{\F^2}\le 4\E_P^*\|\ep_n^o\|_\F$, which in turn is bounded
above by $8\E_P^*\|\ep_n\|_\F$ by the
 desymmetrization inequality (e.g.\ 2.36 in \cite{vdVW}).

Thus $\F^2$ in the last term of (\ref{EqForTemporaryUse}) can be 
replaced by $\F$, at the cost of inserting a constant.
Next we apply (\ref{EqHulp}) to this term,
and conclude that $z^2:=\E_P^* \s_n^2/\|F\|_{P,2}^2$ satisfies the inequality
\begin{equation}
\label{EqBoundz}
z^2\lesssim \d^2+\frac {J(z,\F,L_2)}{\sqrt n\|F\|_{P,2}}.
\end{equation}
We apply Lemma~\ref{LemmaRecursion} with $r=1$, $A=\d$ and
$B^2=1/(\sqrt n\|F\|_{P,2})$ to see that
$$J(z,\F,L_2)
\lesssim J(\d,\F,L_2)+ \frac{J^2(\d,\F,L_2)}{\d^2\sqrt n\|F\|_{P,2}}.$$
We insert this in (\ref{EqHulp}) to complete the proof.
\end{proof}

\begin{lemma}
\label{LemmaRecursion}
Let $J:(0,\infty)\to\RR$ be a concave, nondecreasing function with $J(0)=0$.
If $z^2\le A^2+B^2 J(z^r)$ for some $r\in(0,2)$ and $A,B>0$, then
$$J(z)\lesssim J(A)\Bigl[1+J(A^r)\Bigl(\frac BA\Bigr)^2\Bigr]^{1/(2-r)}.$$
\end{lemma}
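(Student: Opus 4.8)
The plan is to reduce everything to the single structural fact about $J$ that is being used: since $J$ is concave, nonnegative, and vanishes at $0$, the map $t\mapsto J(t)/t$ is nonincreasing. Concretely, comparing $t$ with $0$ via concavity gives $J(\lambda s)\ge \lambda J(s)$ for $\lambda\in(0,1]$, i.e.
$$J(s)\le \frac st\,J(t)\qquad\text{whenever } s\ge t>0.$$
This homogeneity-type bound is the only property of $J$ I expect to need, and it is exactly what lets one ``solve'' a self-referential inequality like $z^2\le A^2+B^2J(z^r)$.

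First I would dispose of the trivial regime $z\le A$. Here monotonicity gives $J(z)\le J(A)$, and since $r\in(0,2)$ and the bracketed factor $\bigl[1+J(A^r)(B/A)^2\bigr]^{1/(2-r)}$ is at least $1$, the asserted inequality holds immediately. So the real content is the case $z>A$.

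In that case $z^r>A^r$, so the scaling bound applies to $J(z^r)$, giving $J(z^r)\le (z/A)^r J(A^r)$. Substituting this into the hypothesis $z^2\le A^2+B^2J(z^r)$ and dividing through by $A^2$, I would set $u=z/A\ge1$ and $K=J(A^r)(B/A)^2$, which collapses the problem to the purely scalar inequality
$$u^2\le 1+K\,u^r,\qquad u\ge1,\ K\ge0,\ r\in(0,2).$$
This is the crux, and it is solved by a two-line case split: if $Ku^r\le1$ then $u^2\le2$, while if $Ku^r\ge1$ then $u^2\le 2Ku^r$, whence $u^{2-r}\le 2K$. In either case $u\lesssim(1+K)^{1/(2-r)}$. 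Translating back, since $z>A$ I again use $J(z)\le(z/A)J(A)=u\,J(A)$, and conclude $J(z)\lesssim J(A)\,(1+K)^{1/(2-r)}$, which is exactly the claim.

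The main obstacle is the self-referential character of the bound on $z$: $z$ appears on both sides through $J(z^r)$, and one cannot solve for $z$ directly. The device that overcomes this is the dichotomy on $z$ versus $A$, which is what guarantees the scaling inequality $J(z^r)\le(z/A)^rJ(A^r)$ points in the usable direction (it requires $z^r\ge A^r$); after that the $u$-substitution turns the functional recursion into an elementary scalar inequality whose solution is routine. I would also note at the outset that $J\ge0$ on $(0,\infty)$, which follows from $J(0)=0$ together with monotonicity and is used implicitly when invoking the scaling bound.
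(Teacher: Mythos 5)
Your proof is correct, and it takes a genuinely different route from the paper's. Both arguments rest on the same structural fact (concavity plus $J(0)=0$ gives $J(s)\le(s/t)J(t)$ for $s\ge t>0$), but the paper treats $J(z^r)$ as the unknown to be solved for: it derives the self-referential bound $J(z^r)\le J(A^r)\bigl(1+(B/A)^2J(z^r)\bigr)^{r/2}$, solves this for $J(z^r)$ by a maximum-of-two-terms argument, substitutes the result into $J(z)\le J(A)\bigl(1+(B/A)^2J(z^r)\bigr)^{1/2}$, and then must invoke Young's inequality for a conjugate pair to absorb the cross term $\sqrt{J(A^r)}\,B/A$ in the resulting three-term bound. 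You instead treat $z$ itself as the unknown: after the dichotomy $z\le A$ versus $z>A$, the scaling bound $J(z^r)\le(z/A)^rJ(A^r)$ eliminates $J$ from the recursion entirely, leaving the scalar inequality $u^2\le1+Ku^r$ in $u=z/A$ with $K=J(A^r)(B/A)^2$, whose solution $u\lesssim(1+K)^{1/(2-r)}$ is an elementary case split; the conclusion then follows from $J(z)\le(z/A)J(A)$. Your route is shorter, avoids the conjugate-exponent step, and produces the bound directly in the stated product form rather than as a sum of terms that must be recombined at the end. Two remarks: your implied constant depends on $r$ (through $2^{1/(2-r)}$, which blows up as $r\uparrow 2$), but the paper's does as well (in solving $J(z^r)^{1-r/2}\lesssim J(A^r)(B/A)^r$), and this is harmless since the lemma is applied only with $r=1$ and $r=1/p<1$. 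Also, your scheme adapts at once to the companion Lemma~\ref{LemmaRecursionLog}, where $z^r$ is replaced by a general $k(z)$ satisfying $k(Cz)\le Ck(z)$ for $C\ge1$: there the same dichotomy gives $J\bigl(k(z)\bigr)\le(z/A)J\bigl(k(A)\bigr)$, hence a linear inequality $u^2\le1+K'u$ in $u$, which is if anything simpler than the paper's treatment of that case.
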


\begin{proof}
For $t>s>0$ we can write $s$ as the convex combination $s=(s/t)t+(1-s/t)0$ of
$t$ and $0$. Since $J(0)=0$, the concavity of $J$ gives
that $J(s)\ge (s/t) J(t)$. Thus the function $t\mapsto J(t)/t$ is 
decreasing, which implies that $J(Ct)\le C J(t)$ for $C\ge1$ and
any $t>0$.

By the monotonicity of $J$ and the assumption on $z$ it follows that
$$J(z^r)\le J\Bigl( \bigl(A^2+B^2J(z^r)\bigr)^{r/2}\Bigr)
\le J(A^r)\Bigl( 1+\Bigl(\frac BA\Bigr)^2J(z^r)\Bigr)^{r/2}.$$
This implies that $J(z^r)$ is bounded by a multiple of
the maximum of $J(A^r)$ and $J(A^r)(B/A)^rJ(z^r)^{r/2}$.
If it is bounded by the second one, then $J(z^r)^{1-r/2}\lesssim
J(A^r)(B/A)^r$. We conclude that
$$J(z^r)\lesssim J(A^r)+J(A^r)^{2/(2-r)}\Bigl(\frac BA\Bigr)^{2r/(2-r)}.$$
Next again by the monotonicity of $J$,
\begin{align*}
J(z)&\le J\Bigl(\sqrt{A^2+B^2J(z^r)}\Bigr)
\le J(A)\sqrt{1+\Bigl(\frac BA\Bigr)^2J(z^r)}\\
&\lesssim J(A)\Bigl[1+\Bigl(\frac BA\Bigr)^2
\Bigl(J(A^r)+J(A^r)^{2/(2-r)}\Bigl(\frac BA\Bigr)^{2r/(2-r)}\Bigr)\Bigr]^{1/2}\\
&\lesssim J(A)\Bigl[1+\sqrt{J(A^r)}\Bigl(\frac BA\Bigr)
+\Bigl(\frac BA\Bigr)^{2/(2-r)}J(A^r)^{1/(2-r)}\Bigr].
\end{align*}
The middle term on the right side is bounded by a multiple of
the sum of the first and third terms, 
since $x\lesssim 1^p+x^q$ for any conjugate pair $(p,q)$ and any $x>0$,
in particular $x=\sqrt{J(A^r)}B/A$.
\end{proof}

For values of $\d$ such that $\d\|F\|_{P,2}\ll 1/\sqrt n$ 
Theorem~\ref{TheoremGineKoltchinskiiAbstract} can be improved. 
(This seems not to be of prime interest for statistical applications.) Its
bound can be written in the form 
$J(\d,\F,L_2)\|F\|_{P,2}+J^2(\d,\F,L_2)/(\d^2\sqrt n)$. In the second
term $\d$ can be replaced by $1/(\|F\|_{P,2}\sqrt n)$, which is
better if $\d$ is smaller than the latter number,
as the function $\d\mapsto J(\d,\F,L_2)/\d$ is decreasing.

\begin{lemma}
Under the conditions of 
Theorem~\ref{TheoremGineKoltchinskiiAbstract},
$$\E_P^*\|\ep_n\|_\F
\lesssim J\bigl(\d,\F,L_2\Bigr)\|F\|_{P,2}+
J^2\Bigl(\frac1{\sqrt n\|F\|_{P,2}},\F,L_2\Bigr)\sqrt n\|F\|_{P,2}^2.$$
\end{lemma}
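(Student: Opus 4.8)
The plan is to rerun the first two steps of the proof of Theorem~\ref{TheoremGineKoltchinskiiAbstract} unchanged and to solve the resulting recursion at a better point. Those steps use only the present hypotheses, and they produce inequalities (\ref{EqHulp}) and (\ref{EqBoundz}). Writing $J(t)$ for $J(t,\F,L_2)$ and $\d_0:=1/(\sqrt n\,\|F\|_{P,2})$, and noting that the coefficient of $J(z)$ in (\ref{EqBoundz}) is exactly $\d_0$, these read
$$\E_P^*\|\ep_n\|_\F\lesssim J(z)\,\|F\|_{P,2},\qquad z^2\lesssim \d^2+\d_0\,J(z),$$
where $z=\sqrt{\E_P^*\s_n^2}/\|F\|_{P,2}$. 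The entire task is to bound $J(z)$ well when $\d$ is small.

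The one new idea is to feed Lemma~\ref{LemmaRecursion} with the enlarged constant $A=\d\vee\d_0$ rather than with $\d$. This is legitimate: since $\d^2\le(\d\vee\d_0)^2$, the recursion above only weakens to $z^2\lesssim(\d\vee\d_0)^2+\d_0 J(z)$, so no inequality is reversed. Applying Lemma~\ref{LemmaRecursion} with $r=1$, $A=\d\vee\d_0$ and $B^2=\d_0$, and using that for $r=1$ its conclusion is $J(z)\lesssim J(A)\bigl[1+J(A)(B/A)^2\bigr]$, I obtain
$$J(z)\lesssim J(\d\vee\d_0)+\d_0\Bigl(\frac{J(\d\vee\d_0)}{\d\vee\d_0}\Bigr)^2.$$

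Next I would invoke the two monotonicity properties of $J$ already established: the map $t\mapsto J(t)/t$ is nonincreasing, and, because the integrand defining $J$ is at least $1$, one has $J(t)\ge t$ for every $t$. Since $\d\vee\d_0\ge\d_0$, the first property bounds the second term of the last display by $\d_0\bigl(J(\d_0)/\d_0\bigr)^2=J^2(\d_0)/\d_0$. For the leading term, if $\d\ge\d_0$ it is $J(\d)$, while if $\d<\d_0$ it is $J(\d_0)$, which by $J(\d_0)\ge\d_0$ is at most $J^2(\d_0)/\d_0$ and hence is absorbed; in both cases $J(z)\lesssim J(\d)+J^2(\d_0)/\d_0$. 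Substituting into (\ref{EqHulp}), multiplying by $\|F\|_{P,2}$, and using $\|F\|_{P,2}/\d_0=\sqrt n\,\|F\|_{P,2}^2$ turns $J^2(\d_0)\|F\|_{P,2}/\d_0$ into $J^2(\d_0)\sqrt n\,\|F\|_{P,2}^2$, which with $\d_0=1/(\sqrt n\|F\|_{P,2})$ is exactly the claimed second term.

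The step I expect to require the most care is precisely this choice $A=\d\vee\d_0$ together with the absorption of the leading term: the gain over Theorem~\ref{TheoremGineKoltchinskiiAbstract} for small $\d$ comes entirely from solving the recursion at the level $\d_0$ instead of at $\d$, and the right to swallow $J(\d_0)$ into $J^2(\d_0)/\d_0$ rests on the inequality $J(t)\ge t$ --- that is, on the harmless $+1$ under the square root in the definition of the entropy integral. The monotonicity of $t\mapsto J(t)/t$ is what makes the two regimes $\d\ge\d_0$ and $\d<\d_0$ collapse into the single stated bound.
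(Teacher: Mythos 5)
Your proof is correct, and it reaches the stated bound by a somewhat different mechanism for the key step than the paper does. The paper also follows Theorem~\ref{TheoremGineKoltchinskiiAbstract} up to (\ref{EqHulp}) and (\ref{EqBoundz}), but then solves the recursion directly: writing $\d_n=1/(\sqrt n\|F\|_{P,2})$, it bounds $J(z)\lesssim J\bigl(\sqrt{\d^2+\d_n J(z)}\bigr)$, splits off the second term by subadditivity of $\d\mapsto J(\d,\F,L_2)$, bounds $J\bigl(\sqrt{\d_n J(z)}\bigr)\le J(\d_n)\sqrt{(J(z)/\d_n)\vee 1}$, and resolves the resulting self-referential inequality by a three-way case analysis, obtaining $J(z)\lesssim J(\d)\vee J^2(\d_n)/\d_n\vee J(\d_n)$, with the last term absorbed into the second because $J(\d_n)\ge\d_n$. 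You instead reuse Lemma~\ref{LemmaRecursion} as a black box at the enlarged scale $A=\d\vee\d_n$, $B^2=\d_n$, and then clean up with the same two elementary facts the paper invokes (that $t\mapsto J(t)/t$ is nonincreasing and that $J(t)\ge t$). The two arguments rest on identical ingredients and the same strategic insight (solve the recursion at the scale $\d\vee\d_n$ rather than $\d$); yours has the advantage of not requiring a fresh implicit-inequality argument, since the case analysis is already packaged inside Lemma~\ref{LemmaRecursion}, while the paper's direct route avoids the detour through $\d\vee\d_n$ and produces the $\vee$-form of the bound immediately. One small point, which affects your write-up and the paper's equally: (\ref{EqBoundz}) holds only up to a multiplicative constant, so strictly speaking the lemma should be applied with $A$ and $B$ inflated by that constant; this is harmless because $J(Ct)\le CJ(t)$ for $C\ge1$, but it deserves a word.
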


\begin{proof}
We follow the proof of Theorem~\ref{TheoremGineKoltchinskiiAbstract}
up to (\ref{EqBoundz}), but next use the alternative bounds
\begin{align*}
J(z,\F,L_2)
&\lesssim 
J\Bigl(\sqrt{\d^2+\frac {J(z,\F,L_2)}{\sqrt n\|F\|_{P,2}}},\F,L_2\Bigr)\cr
&\le J(\d,\F,L_2)
+J\Bigl(\sqrt{\frac {J(z,\F,L_2)}{\sqrt n\|F\|_{P,2}}},\F,L_2\Bigr)\cr
&\le J(\d,\F,L_2)+J(\d_n,\F,L_2)
\sqrt{\frac{J(z,\F,L_2)}{\d_n}\vee 1},
\end{align*}
for $1/\d_n=\sqrt n\|F\|_{P,2}$.
Here we have used the subadditivity of the map $\d\mapsto J(\d,\F,L_2)$, 
and the inequality
$J(C\d,\F,l_2)\le CJ(\d,\F,L_2)$ for $C\ge 1$ in the last step. 
We can bound
the sum of the three terms on the right side by 
a multiple of the maximum of these terms and conclude that
the left side is smaller than at least one of the three terms. Solving
next yields that
$$J(z,\F,L_2)
\lesssim J(\d,\F,L_2)
\vee  \frac{J^2(\d_n,\F,L_2)}{\d_n}
\vee J(\d_n,\F,L_2).$$
Because $J(\d_n,\F,L_2)\ge \d_n$ for 
every $\d_n>0$, by the definition of the entropy integral,
the third term on the right
is bounded by the second term.
We substitute the bound in (\ref{EqHulp}) to finish the proof.
\end{proof}

\section{Unbounded Classes}
In this section we investigate
relaxations of the assumption that
the class $\F$ of functions is uniformly bounded, made
in Theorem~\ref{TheoremGineKoltchinskiiAbstract}. We start
with a moment bound on the envelope.

\begin{theorem}
\label{TheoremGineKoltchinskiiLp}
Let $\F$ be a $P$-measurable class of measurable functions 
with envelope function $F$ such that $PF^{(4p-2)/(p-1)}<\infty$ for some $p>1$
and such that $\F^2$ and $\F^4$ are $P$-measurable.
If $Pf^2< \d^2PF^2$ for every $f$ and some $\d\in(0,1)$, then
$$\E_P^*\|\ep_n\|_\F
\lesssim J\bigl(\d,\F,L_2\bigr)
\Bigl(1+\frac{J(\d^{1/p},\F,L_2)}{\d^2\sqrt n}
\frac{\|F\|_{P,(4p-2)/(p-1)}^{2-1/p}}{\|F\|_{P,2}^{2-1/p}}\Bigr)^{p/(2p-1)}
\|F\|_{P,2}.$$
\end{theorem}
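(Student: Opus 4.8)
The plan is to adapt the two-step strategy of Theorem~\ref{TheoremGineKoltchinskiiAbstract} to the unbounded case, where the obstacle is that the contraction principle argument that replaced $\F^2$ by $\F$ exploited $F\le 1$, which is no longer available. The first step is unchanged: starting from the refinement~(\ref{EqRef2.14.1}) and the concavity of the perspective function $H(x,y)=J(\sqrt{x/y},\F,L_2)\sqrt y$, an application of Jensen's inequality yields~(\ref{EqHulp}), so that with $\sigma_n^2=\sup_f\PP_nf^2$ and $z^2=\E_P^*\sigma_n^2/\|F\|_{P,2}^2$ it again suffices to control $\E_P^*\sigma_n^2$. As before we split $\PP_nf^2=Pf^2+n^{-1/2}\ep_nf^2$ and use $Pf^2\le\delta^2 PF^2$ to get $\E_P^*\sigma_n^2\le\delta^2\|F\|_{P,2}^2+n^{-1/2}\E_P^*\|\ep_n\|_{\F^2}$, and we symmetrize to pass to the Rademacher process $\ep_n^o$ over $\F^2$.

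The key new idea is to handle $\E_\e\|\ep_n^o\|_{\F^2}$ without a uniform bound on $F$. I would apply the contraction principle to the maps $x\mapsto x^2/(2F)$ (which are $1$-Lipschitz on $[-F,F]$ after suitable truncation, or more cleanly by writing $f^2=f\cdot f$ and peeling off one factor), producing a factor involving $\|F\|_{P,s}$ for an $L_s$-norm of the envelope rather than the sup-norm. Concretely, a Cauchy--Schwarz or Hölder split of $\|\ep_n\|_{\F^2}$ against $\|\ep_n\|_{\F\cdot F}$ and then against $\|\ep_n\|_\F$, balanced through the exponent $p$, should produce the bound $\E_P^*\|\ep_n\|_{\F^2}\lesssim \|F\|_{P,(4p-2)/(p-1)}^{\,1-1/p}\,(\E_P^*\|\ep_n\|_\F)^{1/p}$ or a variant, the precise moment $(4p-2)/(p-1)$ being exactly what makes the Hölder exponents conjugate. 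This is the step I expect to be the main obstacle, since one must track how the envelope moment enters and close the recursion self-consistently rather than linearly.

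Feeding the resulting inequality back through~(\ref{EqHulp}) gives a recursion for $z^2$ of the form $z^2\lesssim\delta^2+B^2 J(z^{r})$ with $r=1/p\in(0,2)$ and $B^2$ carrying the envelope-moment ratio $\|F\|_{P,(4p-2)/(p-1)}^{2-1/p}/(\sqrt n\,\|F\|_{P,2}^{2-1/p})$; here the exponent $1/p$ on $z$ reflects that only a $1/p$ power of $\E_P^*\|\ep_n\|_\F$ survived the Hölder split, and it also explains the appearance of $J(\delta^{1/p},\cdot)$ in the statement. I would then invoke Lemma~\ref{LemmaRecursion} with this value of $r$, $A=\delta$, and the stated $B$, which produces $J(z,\F,L_2)\lesssim J(\delta,\F,L_2)\bigl[1+J(\delta^{1/p})(B/\delta)^2\bigr]^{1/(2-1/p)}$, and the exponent $1/(2-1/p)=p/(2p-1)$ matches the theorem exactly. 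Substituting into~(\ref{EqHulp}) and simplifying the constants completes the proof; the only care needed beyond the bounded case is the Hölder bookkeeping in the second step and verifying that all measurability hypotheses (now on $\F^2$ and $\F^4$) are exactly those required to justify the outer-expectation Jensen and symmetrization steps.
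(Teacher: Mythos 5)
Your outline gets the outer skeleton right---the Jensen/perspective step leading to (\ref{EqHulp}), and the final appeal to Lemma~\ref{LemmaRecursion} with $r=1/p$, whose exponent $1/(2-r)=p/(2p-1)$ matches the statement---but the middle step, which you yourself flag as ``the main obstacle,'' is exactly where the proof lives, and the tools you propose for it fail. The contraction principle cannot replace $\F^2$ by $\F$ here: with an unbounded envelope, $x\mapsto x^2$ is only Lipschitz with the random constant $2F(X_i)$ per coordinate, so contraction either produces the weighted class $\{f^2/F\}$ (which you cannot convert back to $\F$), or costs a global factor $\max_i F(X_i)$, whose expectation grows polynomially in $n$ and destroys the $1/\sqrt n$ scaling of the correction term. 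A ``H\"older split of $\|\ep_n\|_{\F^2}$ against $\|\ep_n\|_{\F\cdot F}$ and then against $\|\ep_n\|_\F$'' is also unavailable: H\"older's inequality applies to positive measures, and $\ep_n$ is a signed, centered process. Your target inequality $\E_P^*\|\ep_n\|_{\F^2}\lesssim \|F\|_{P,(4p-2)/(p-1)}^{1-1/p}(\E_P^*\|\ep_n\|_\F)^{1/p}$ is not even dimensionally consistent (replacing $f,F$ by $cf,cF$ scales the left side by $c^2$ and the right side by $c$). And even granting a corrected variant, feeding (\ref{EqHulp}) into $(\E_P^*\|\ep_n\|_\F)^{1/p}$ would yield a recursion of the form $z^2\lesssim \d^2+B^2 J(z,\F,L_2)^{1/p}$, i.e.\ a \emph{power of} $J$ at $z$, which is not the form $z^2\lesssim \d^2+B^2J(z^{1/p},\F,L_2)$ that Lemma~\ref{LemmaRecursion} requires; for concave $J$ with $J(0)=0$ the quantities $J(z)^{1/p}$ and $J(z^{1/p})$ are not comparable up to constants in general, so this mismatch is not cosmetic.

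The paper's actual proof avoids symmetrization and contraction altogether, and this is its key idea. It applies the refined inequality (\ref{EqRef2.14.1}) directly to the class $\F^2$ with envelope $F^2$ (which is why $P$-measurability of $\F^2$ and $\F^4$ is hypothesized), and then invokes the \emph{preservation of uniform entropy under Lipschitz transformations}: $J(\d,\F^2\given F^2,L_2)\lesssim J(\d,\F\given F,L_2)$, since the uniform entropy integral is envelope-normalized, squaring is essentially free---this is what substitutes for the contraction principle when $F$ is unbounded. The resulting bound involves the empirical $L_4$-diameter $\s_{n,4}$, which is related back to $\s_{n,2}$ by H\"older's inequality applied to the \emph{positive} measure $\PP_n$: $\PP_nf^4\le(\PP_n|f|^{(4-s)p})^{1/p}(\PP_nF^{sq})^{1/q}$ with $(4-s)p=2$, hence $sq=(4p-2)/(p-1)$; this single computation is the source of both the exponent $\d^{1/p}$ inside $J$ and the envelope moment in the statement. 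A further Jensen step (concavity of $(x,y,z)\mapsto J(\sqrt{x^{1/p}y^{1/q}/z},\F,L_2)\sqrt z$) then yields exactly $z^2\lesssim\d^2+B^2J(z^{1/p},\F,L_2)$, to which Lemma~\ref{LemmaRecursion} applies. So your first and last steps are sound, but the core of your argument would need to be replaced by this entropy-preservation-plus-empirical-H\"older mechanism.
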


\begin{proof}
Application of (\ref{EqRef2.14.1}) to the functions $f^2$, forming the
class $\F^2$ with envelope function $F^2$, yields 
\begin{equation}
\label{EqBoundF2}
\E^*_P\|\ep_n\|_{\F^2}\lesssim \E_P^*J\Bigl(\frac{\s_{n,4}^2}
{(\PP_nF^4)^{1/2}},\F^2\given F^2,L_2\Bigr)\,(\PP_nF^4)^{1/2},
\end{equation}
for $\s_{n,r}$ the diameter of $\F$ in $L_r(\PP_n)$, i.e.\
\begin{equation}
\label{EqDiameter}
\s_{n,r}^r=\sup_f\PP_n|f|^r.
\end{equation}
Preservation properties of uniform entropy (see \cite{Pollardbook}, or
\cite{vdVW}, 2.10.20, where the supremum over $Q$ can also be
moved outside the integral to match our current definition of
entropy integral, applied to $\phi(f)=f^2$ with $L=2F$) show that
$J(\d,\F^2\given F^2,L_2)\lesssim J(\d,\F\given F,L_2)$, for every $\d>0$.
Because $\PP_nf^2= Pf^2+n^{-1/2}\ep_nf^2$ and $Pf^2\le \d^2PF^2$ by assumption,
we find that
\begin{align}
\E_P^*\s_{n,2}^2
%&\le \d^2PF^2+\frac1{\sqrt n}\E_P^*\sup_f\|\ep_n\|_{\F^2}\\
&\lesssim\d^2PF^2+\frac1{\sqrt n}
\E_P^*J\Bigl(\frac{\s_{n,4}^2}
{(\PP_nF^4)^{1/2}},\F,L_2\Bigr)\,(\PP_nF^4)^{1/2}.
\label{Eq***}
\end{align}
The next step is to bound $\s_{n,4}$ in terms of $\s_{n,2}$.

By H\"older's inequality, for any conjugate pair $(p,q)$ and any
$0<s<4$,
$$\PP_nf^4\le\PP_n |f|^{4-s}F^s\le 
\bigl(\PP_n|f|^{(4-s)p}\bigr)^{1/p}\bigl(\PP_nF^{sq}\bigr)^{1/q}.$$
Choosing $s$ such that $(4-s)p=2$ (and hence
$sq=(4p-2)/(p-1)$), we find that 
$$\s_{n,4}^{4}\le \s_{n,2}^{2/p}\bigl(\PP_nF^{sq}\bigr)^{1/q}.$$
We insert this bound in (\ref{Eq***}).
The function $(x,y)\mapsto x^{1/p}y^{1/q}$ is concave,
and hence the function $(x,y,z)\mapsto J(\sqrt{x^{1/p}y^{1/q}/z},\F,L_2)\sqrt z$
can be seen to be concave by the same arguments as in the
proof of Theorem~\ref{TheoremGineKoltchinskiiAbstract}.
Therefore, we can apply Jensen's inequality to see that 
$$\E_P^*\s_{n,2}^2\lesssim\d^2PF^2+\frac1{\sqrt n}
J\Bigl(\frac{(\E^*_P\s_{n,2}^2)^{1/(2p)}\bigl(PF^{sq}\bigr)^{1/(2q)}}
{(PF^4)^{1/2}},\F,L_2\Bigr)\,(PF^4)^{1/2}.$$
We conclude that $z:=(\E_P^*\s_{n,2}^2)^{1/2}/\|F\|_{P,2}$ satisfies
\begin{align*}
z^2
&\lesssim \d^2 
+\frac1{\sqrt n} J\Bigl(z^{1/p}\frac{(PF^2)^{1/(2p)}(PF^{sq})^{1/(2q)}}
{(PF^4)^{1/2}},\F,L_2\Bigr)\frac{(PF^4)^{1/2}}{PF^2}\\
&\lesssim \d^2 
+J(z^{1/p},\F,L_2)\frac{(PF^{sq})^{1/(2q)}}{\sqrt n(PF^2)^{1-1/(2p)}}.
\end{align*}
In the last step we use that $J(C\d,\F,L_2)\le CJ(\d,\F,L_2)$ for $C\ge1$,
and H\"older's inequality as previously to see that the present $C$ 
satisfies this condition.
We next apply Lemma~\ref{LemmaRecursion} (with $r=1/p$) to obtain a bound
on $J(z,\F,L_2)$, and conclude the proof by substituting this
bound in (\ref{EqHulp}).
\end{proof}

The preceding theorem assumes only a finite moment of
the envelope function, but in comparison to
Theorem~\ref{TheoremGineKoltchinskiiAbstract} 
substitutes $J(\d^{1/p},\F,L_2)$ in the correction term of
the upper bound, where $p>1$ and hence $\d^{1/p}\gg \d$ for small
$\d$. In applications
to moduli of continuity of minimum contrast criteria this
is sufficient to obtain consistency with a rate, but
typically the rate will be suboptimal. The rate improves
as $p\da1$, which requires finite moments of the envelope
function of order increasing to infinity, the limiting
case $p=1$ corresponding to a bounded envelope, as in 
Theorem~\ref{TheoremGineKoltchinskiiAbstract}. The following theorem
interpolates between finite moments of any order and
a bounded envelope function. If applied to obtaining
rates of convergence it gives rates that are optimal up to
a logarithmic factor.

\begin{theorem}
\label{TheoremGineKoltchinskiiExp}
Let $\F$ be a $P$-measurable class of measurable functions 
with envelope function $F$ such that $P\exp(F^{p+\r})<\infty$ for some $p,\r>0$
and such that $\F^2$ and $\F^4$ are $P$-measurable.
If $Pf^2< \d^2PF^2$ for every $f$ and some $\d\in(0,1/2)$, then
for a constant $c$ depending on $p$, $PF^2$, $PF^4$ and 
$P\exp(F^{p+\r})$,
$$\E_P^*\|\ep_n\|_\F
\le c J\bigl(\d,\F,L_2\bigr)
\Bigl(1+\frac{J\bigl(\d(\log(1/\d))^{1/p},\F,L_2\bigr)}{\d^2\sqrt n}\Bigr).$$
\end{theorem}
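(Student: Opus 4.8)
The plan is to run the scheme of Theorems~\ref{TheoremGineKoltchinskiiAbstract} and~\ref{TheoremGineKoltchinskiiLp}. First I would reduce, through the refined inequality~(\ref{EqRef2.14.1}) and the concavity/Jensen step leading to~(\ref{EqHulp}), to bounding the normalized quantity $z=\sqrt{\E_P^*\s_{n,2}^2}/\|F\|_{P,2}$, where $\s_{n,2}^2=\sup_f\PP_nf^2$. As in~(\ref{EqForTemporaryUse})--(\ref{Eq***}), writing $\PP_nf^2=Pf^2+n^{-1/2}\ep_nf^2$ and using $Pf^2\le\d^2PF^2$ gives $\E_P^*\s_{n,2}^2\le\d^2PF^2+n^{-1/2}\E_P^*\|\ep_n\|_{\F^2}$. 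Everything then hinges on bounding $\E_P^*\|\ep_n\|_{\F^2}$, and this is where the unbounded envelope must be handled; the measurability of $\F^2$ and $\F^4$ assumed in the hypotheses is what makes the intermediate suprema measurable.

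The decisive idea is to truncate the envelope at a level $M$ and to split
$$\E_P^*\|\ep_n\|_{\F^2}\le\E_P^*\|\ep_n\|_{\F^2\mathbf 1_{\{F\le M\}}}+\E_P^*\|\ep_n\|_{\F^2\mathbf 1_{\{F>M\}}}.$$
The tail class is treated crudely: its envelope is $F^2\mathbf 1_{\{F>M\}}$, so the triangle inequality gives $\E_P^*\|\ep_n\|_{\F^2\mathbf 1_{\{F>M\}}}\le2\sqrt n\,P\bigl(F^2\mathbf 1_{\{F>M\}}\bigr)$, contributing $2P(F^2\mathbf 1_{\{F>M\}})$ to the bound on $\E_P^*\s_{n,2}^2$. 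For the bounded class I would apply~(\ref{EqRef2.14.1}) to $\F^2\mathbf 1_{\{F\le M\}}$ exactly as in~(\ref{EqBoundF2}); since $|f|\le F\le M$ on $\{F\le M\}$, each $f^2\mathbf 1_{\{F\le M\}}$ has $L_2(\PP_n)$-norm at most $M\s_{n,2}$, because $\PP_n(f^4\mathbf 1_{\{F\le M\}})\le M^2\s_{n,2}^2$. Together with the entropy-preservation bound $J(\cdot,\F^2\mathbf 1_{\{F\le M\}}\given F^2\mathbf 1_{\{F\le M\}},L_2)\lesssim J(\cdot,\F\given F,L_2)$ and a Jensen step (whose concavity is verified as in Theorem~\ref{TheoremGineKoltchinskiiAbstract}, now for $(x,y)\mapsto J(\sqrt{M^2x/y})\sqrt y$), this yields a recursion of the form
$$z^2\lesssim\d^2+\frac{(PF^4)^{1/2}}{\sqrt n\,PF^2}\,J(c\,Mz,\F,L_2)+\frac{P(F^2\mathbf 1_{\{F>M\}})}{PF^2},$$
for a constant $c$ depending on the moments of $F$.

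The crucial feature is that the truncation factor $M$ enters \emph{inside} the entropy integral, as a rescaling of its argument, rather than as a prefactor; this is exactly what will produce $J(\d(\log(1/\d))^{1/p})$ in the final bound instead of the much weaker $(\log(1/\d))^{1/p}J(\d)$ that a direct contraction argument \`a la Theorem~\ref{TheoremGineKoltchinskiiAbstract} would give. To make the tail negligible I would choose $M=(\log(1/\d))^{1/p}$. Then from the exponential moment one has $P(F>t)\lesssim e^{-t^{p+\r}}$, hence $P(F^2\mathbf 1_{\{F>M\}})\lesssim M^2e^{-M^{p+\r}}$, and with this choice $M^{p+\r}=(\log(1/\d))^{1+\r/p}\gg\log(1/\d)$, so the tail is smaller than any power of $\d$ and in particular $\lesssim\d^2PF^2$. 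Here the \emph{extra} $\r>0$ is essential: it renders the tail super-polynomially small in $\d$, so that the polynomial and logarithmic prefactors thrown off by the truncation are harmless, and this is precisely why the clean choice $M=(\log(1/\d))^{1/p}$ succeeds.

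With the tail absorbed into $\d^2$, the recursion becomes $z^2\lesssim\d^2+c'n^{-1/2}J(cMz,\F,L_2)$. I would solve it by substituting $w=cMz$ and applying Lemma~\ref{LemmaRecursion} with $r=1$ and $A\asymp M\d=\d(\log(1/\d))^{1/p}$, then recover $J(z)$ by feeding the bound on $J(w)$ back into the recursion and using subadditivity of $J$, absorbing the resulting cross term by the same geometric-mean estimate ($x\lesssim 1^p+x^q$) used at the end of the proof of Lemma~\ref{LemmaRecursion}; substituting into~(\ref{EqHulp}) then delivers the leading factor $J(\d)$ together with the correction $J(\d(\log(1/\d))^{1/p})/(\d^2\sqrt n)$. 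I expect the main obstacle to be exactly this interplay: one must force $M$ inside the entropy integral (which dictates the route through $\F^2$ and the diameter bound $M\s_{n,2}$, rather than a contraction that would leave $M$ outside) while simultaneously keeping the \emph{leading} term at $J(\d)$ rather than at the inflated $J(M\d)$ --- and it is the subadditivity-plus-cross-term bookkeeping in this last step that separates the two.
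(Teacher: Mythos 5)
Your proposal is sound, and it reaches the theorem by a genuinely different route at the one place where this theorem differs from Theorems~\ref{TheoremGineKoltchinskiiAbstract} and~\ref{TheoremGineKoltchinskiiLp}, namely the control of the $L_4$-diameter by the $L_2$-diameter for an unbounded class. The paper never truncates: it proves the pointwise-in-$f$ bound $\PP_nf^4\lesssim k(\PP_nf^2)\bigl(1+\PP_nG\bigr)$ via Young's inequality for the conjugate pair $\psi(f)=\log^r(1+f)$, $\overline\psi(f)=e^{f^{1/r}}-1$ with $r=2/p$, where $k$ is a concavified version of $t\mapsto t\log^r(1/t)$ and $G=\overline\Psi\bigl(F^2\log^r(2+F^2)\bigr)$ is integrable exactly because of the extra $\r$; it then feeds the resulting recursion $z^2\lesssim \d^2+n^{-1/2}J\bigl(\sqrt{k(z^2b)}\,c,\F,L_2\bigr)$ into the dedicated Lemma~\ref{LemmaRecursionLog}. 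Your argument instead truncates at $M=(\log(1/\d))^{1/p}$, kills the tail class by the crude $L_1$-bound $\E_P^*\|\ep_n\|_{\F^2\mathbf{1}_{\{F>M\}}}\le 2\sqrt n\,P(F^2\mathbf{1}_{\{F>M\}})$ together with the Markov bound $P(F>t)\le e^{-t^{p+\r}}Pe^{F^{p+\r}}$ (this is where $\r>0$ enters for you, making the tail $\lesssim\d^2$ uniformly on $(0,1/2)$), and uses $\PP_n(f^4\mathbf{1}_{\{F\le M\}})\le M^2\s_{n,2}^2$ on the bounded part; this yields the same recursion but with the linear map $z\mapsto cMz$ in place of $k$. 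Note that your final ``solve via Lemma~\ref{LemmaRecursion} with $r=1$, then feed back and absorb the cross term'' is precisely the proof of Lemma~\ref{LemmaRecursionLog} specialized to a linear $k$ (which trivially satisfies $k(Cz)\le Ck(z)$ for $C\ge1$), so you could invoke that lemma directly and skip the bookkeeping. What each route buys: yours is more elementary and makes the origin of the factor $(\log(1/\d))^{1/p}$ transparent (it is just the truncation level needed to make the tail negligible), at the price of a $\d$-dependent decomposition of the class and the minor obligation to assume or verify $P$-measurability and the entropy-preservation bound for the truncated classes $\F^2\mathbf{1}_{\{F\le M\}}$ (both go through by the same arguments the paper cites for $\F^2$, with Lipschitz constant $2F\mathbf{1}_{\{F\le M\}}$); the paper keeps a single class and a $\d$-free function $k$, at the price of the Young-inequality machinery and the concavification of $t\log^r(1/t)$. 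Both arguments correctly identify the two delicate points: routing through $\F^2$ via (\ref{EqRef2.14.1}) so that the factor $M$ (respectively $k$) lands \emph{inside} the entropy integral rather than in front of it, and recovering the leading factor $J(\d,\F,L_2)$ rather than the inflated $J(\d(\log(1/\d))^{1/p},\F,L_2)$ when substituting back into (\ref{EqHulp}).
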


\begin{proof}
Fix $r=2/p$.
The functions $\psi,\overline\psi:[0,\infty)\to[0,\infty)$ defined by
$$\psi(f)=\log^r(1+f),\qquad
\overline\psi(f)=e^{f^{1/r}}-1,$$
are each other's inverses, and are increasing from 
$\psi(0)=\overline\psi(0)=0$ to infinity.
Thus their primitive functions $\Psi(f)=\int_0^f\psi(s)\,ds$
and $\overline\Psi(f)=\int_0^f\overline\psi(s)\,ds$ satisfy
\emph{Young's inequality} $fg\le \Psi(f)+\overline\Psi(g)$, for every $f,g\ge0$
(e.g.\ \cite{Boyd}, page 120, 3.38).

The function $t\mapsto t\log^r (1/t)$ is concave in a neighbourhood
of 0 (specifically: on the interval $(0,e^{1-r}\wedge 1)$), with limit
from the right equal to 0 at 0, and derivative tending to infinity 
at this point. Therefore, there exists a \emph{concave, increasing} function 
$k:(0,\infty)\to (0,\infty)$ that is identical to 
$t\mapsto t\log^r (1/t)$ near 0 and bounded below and above by a
positive constant times the identity throughout its domain. 
(E.g.\ extend $t\mapsto t\log^r (1/t)$ linearly with slope 1
from the point where the derivative of the latter function has decreased
to 1.) Write $k(t)=t\ell^r(t)$, so that $\ell^r$ is bounded
below by a constant and $\ell(t)=\log (1/t)$ near 0.
Then,  for every $t>0$,
\begin{equation}
\label{EqBoundLog}
\frac{\log(2+t/C)}{\ell(C)}\lesssim \log(2+t).
\end{equation}
(The constant in $\lesssim$ may depend on $r$.)
To see this, note that for $C>c$
the left side is bounded by a multiple of $\log(2+t/c)$, whereas
for small $C$ the left side is bounded by a multiple of 
$\bigl[\log(2+t)+\log(1+1/C)\bigr]/\ell(C)\lesssim \log(2+t)+1$.

From the inequality $\Psi(f)\le f\psi(f)$, we obtain that, for $f>0$,
$$\Psi\Bigl(\frac f{\log^r(2+f)}\Bigr)\lesssim f.$$
Therefore, by (\ref{EqBoundLog}) followed by Young's inequality, 
\begin{align*}
\frac{f^4}{k(C^2)}
&=\frac{f^2/C^2}{\log^r(2+f^2/C^2)}
\frac{f^2\log^r(2+f^2/C^2)}{\ell^r(C^2)}\\
&\lesssim\frac{f^2}{C^2}+\overline\Psi\bigl(F^2\log^r(2+F^2)\bigr).
\end{align*}
On integrating this with respect to the empirical measure,
with $C^2=\PP_nf^2$, we see that, with 
$G=\overline\Psi\bigl(F^2\log^r(2+F^2)\bigr)$,
$$\PP_nf^4\lesssim k(\PP_nf^2)\,\bigl(1+\PP_nG\bigr).$$
We take the supremum over $f$ to bound $\s_{n,4}^4$ 
as in (\ref{EqDiameter}) in terms
of $k(\s_{n,2}^2)$, and next substitute this bound
in (\ref{Eq***}) to find that
\begin{align*}
\E_P^*\s_{n,2}^2
&\le\d^2PF^2+\frac1{\sqrt n}
\E_P^*J\Bigl(\frac{\sqrt {k(\s_{n,2}^2)}\sqrt{1+\PP_nG}}
{(\PP_nF^4)^{1/2}},\F,L_2\Bigr)(\PP_nF^4)^{1/2}\\
&\le\d^2PF^2+\frac1{\sqrt n}
J\Bigl(\frac{\sqrt {k(\E_P^*\s_{n,2}^2)}\sqrt{1+PG}}
{(PF^4)^{1/2}},\F,L_2\Bigr)(PF^4)^{1/2},
\end{align*}
where we have used the concavity of $k$, and
the concavity of the other maps, as previously. 
By assumption the expected value $PG$ is finite for $r=2/p$.
It follows that
$z^2=\E_P^*\s_{n,2}^2/PF^2$ satisfies, for suitable 
constants $a,b,c$ depending on $r$, $PF^2$, $PF^4$ and $PG$,
$$z^2\lesssim \d^2+\frac{a}{\sqrt n}J(\sqrt{k(z^2b)}c,\F,L_2).$$
By concavity and the fact that $k(0)=0$, we have
$k(Cz)\le Ck(z)$, for $C\ge 1$ and $z>0$. The function
$z\mapsto \sqrt{k(z^2b)}c$ inherits this property.
Therefore we can apply Lemma~\ref{LemmaRecursionLog}, 
with $k$ of the lemma equal to the present function 
$z\mapsto \sqrt{k(z^2b)}c$, to obtain a bound on $J(z,\F,L_2)$
in terms of $J(\d,\F,L_2)$ and $J\bigl(\sqrt{k(\d^2b)c},\F,L_2)$,
which we substitute in (\ref{EqHulp}).
Here $k(\d^2)=\d^2\log^r(1/\d)$ for sufficiently small $\d>0$ and 
$\k(\d^2)\lesssim \d^2\lesssim\d^2\log^r(1/\d)$ for $\d<1/2$ 
and bounded away from 0.
Thus we can simplify the bound to the one in the statement
of the theorem, possibly after increasing the constants
$a,b,c$ to be at least 1, to complete the proof.
\end{proof}

\begin{lemma}
\label{LemmaRecursionLog}
Let $J:(0,\infty)\to\RR$ be a concave, nondecreasing function with $J(0)=0$,
and let $k:(0,\infty)\to(0,\infty)$ be nondecreasing and 
satisfy $k(Cz)\le C k(z)$ for $C\ge 1$ and $z>0$.
If $z^2\le A^2+B^2 J\bigl(k(z)\bigr)$ for some $A,B>0$, then
$$J(z)\lesssim J(A)\Bigl[1+J\bigl(k(A)\bigr)\Bigl(\frac BA\Bigr)^2\Bigr].$$
\end{lemma}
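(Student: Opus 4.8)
The plan is to mirror the proof of Lemma~\ref{LemmaRecursion}, recognizing that the scaling hypothesis $k(Cz)\le Ck(z)$ for $C\ge1$ plays exactly the role that the map $z\mapsto z$ (the case $r=1$) plays there. As a preliminary I would reestablish, verbatim from the first paragraph of that proof, that the concavity of $J$ together with $J(0)=0$ forces $t\mapsto J(t)/t$ to be nonincreasing, and hence $J(Ct)\le CJ(t)$ for every $C\ge1$ and $t>0$. The two multiplicative inequalities $k(Ct)\le Ck(t)$ and $J(Ct)\le CJ(t)$, both valid for $C\ge1$, are the only structural facts I will use.

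The key step is to obtain a self-contained inequality for $J\bigl(k(z)\bigr)$ alone. From the hypothesis I write $z\le\sqrt{A^2+B^2J(k(z))}=AC$, where $C:=\sqrt{1+(B/A)^2J(k(z))}\ge1$. Since $k$ is nondecreasing, $k(z)\le k(AC)\le Ck(A)$ by the scaling hypothesis; applying the nondecreasing $J$ and then $J\bigl(Ck(A)\bigr)\le CJ\bigl(k(A)\bigr)$ gives
$$J\bigl(k(z)\bigr)\le J\bigl(Ck(A)\bigr)\le CJ\bigl(k(A)\bigr)=J\bigl(k(A)\bigr)\sqrt{1+(B/A)^2J(k(z))}.$$
Thus, writing $W:=J\bigl(k(z)\bigr)$ and $D:=J\bigl(k(A)\bigr)$, I have reduced everything to the scalar inequality $W\le D\sqrt{1+(B/A)^2W}$, in which $z$ no longer appears inside nested functions. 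This factoring out of the common factor $C=\sqrt{1+(B/A)^2J(k(z))}$ from both $k$ and $J$ is the crux, and I expect it to be the one place where the precise scaling hypotheses are indispensable.

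Solving $W\le D\sqrt{1+(B/A)^2W}$ is then routine: squaring and bounding the right-hand side by a constant multiple of the larger of its two summands (equivalently, applying Lemma~\ref{LemmaRecursion} with $r=1$ to the identity function) yields $W\lesssim D\bigl[1+D(B/A)^2\bigr]$, that is, $J\bigl(k(z)\bigr)\lesssim J\bigl(k(A)\bigr)\bigl[1+J(k(A))(B/A)^2\bigr]$. Finally I return to the chain $J(z)\le J\bigl(\sqrt{A^2+B^2J(k(z))}\bigr)\le J(A)\sqrt{1+(B/A)^2J(k(z))}$, substitute the bound just obtained, and simplify. Writing $u:=(B/A)^2J\bigl(k(A)\bigr)$, the radicand takes the form $1+au+bu^2$ for constants $a,b\gtrsim1$, which is bounded by a multiple of $(1+u)^2$; taking the square root gives $J(z)\lesssim J(A)(1+u)=J(A)\bigl[1+J(k(A))(B/A)^2\bigr]$, as claimed.
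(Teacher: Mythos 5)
Your proposal is correct and follows essentially the same route as the paper's own proof: you establish the scaling property of $J\circ k$ (in two steps, $k$ then $J$, which is exactly how the paper derives $J\circ k(Cz)\le CJ\circ k(z)$), use it to get the self-bounding inequality $J\bigl(k(z)\bigr)\le J\bigl(k(A)\bigr)\sqrt{1+(B/A)^2J(k(z))}$, solve it, and substitute back into $J(z)\le J(A)\sqrt{1+(B/A)^2J(k(z))}$. The only cosmetic difference is the final simplification, where you bound the radicand by a multiple of $\bigl(1+u\bigr)^2$ while the paper takes the square root termwise and absorbs the cross term $\sqrt{u}$ into $1+u$; these are equivalent.
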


\begin{proof}
As noted in the proof of Lemma~\ref{LemmaRecursion}
the properties of $J$ imply that
$J(Cz)\le C J(z)$ for $C\ge1$ and any $z>0$. In view
of the assumed property of $k$ and the monotonicity of $J$ it follows that
$J\circ k(C z)\le CJ\circ k(z)$ for every $C\ge 1$ and $z>0$.
Therefore, by the monotonicity of $J$ and $k$, and the assumption on $z$,
$$J\circ k(z)
\le J\circ k\Bigl(\sqrt{A^2+B^2J\circ k(z)}\Bigr)
\le J\circ k(A)\sqrt{1+(B/A)^2J\circ k(z)}.$$
As in the proof of Lemma~\ref{LemmaRecursion}
we can solve this for $J\circ k(z)$ to find that
$$J\circ k(z)\lesssim J\circ k(A)+J\circ k(A)^2\Bigl(\frac BA\Bigr)^2.$$
Next again by the monotonicity of $J$,
\begin{align*}
J(z)&\le J\Bigl(\sqrt{A^2+B^2J\circ k(z)}\Bigr)
\le J(A)\sqrt{1+(B/A)^2J\circ k(z)}\\
&\lesssim J(A)\Bigl[1+\Bigl(\frac BA\Bigr)\sqrt{J\circ k(A)}
+\Bigl(\frac BA\Bigr)^2J\circ k(A)\Bigr].
\end{align*}
The middle term on the right side is bounded by 
the sum of the first and third terms.
\end{proof}

\bibliographystyle{acm}
\bibliography{maxUE}

\end{document}